\documentclass{svmult}

\usepackage{type1cm}        
\usepackage{makeidx}         
\usepackage{graphicx}        
\usepackage{multicol}        
\usepackage[bottom]{footmisc}
\usepackage{bm}
\usepackage{subcaption}
\makeatletter
\newcommand{\Rmnum}[1]{\mathrm{\@Roman{#1}}}
\makeatother
\usepackage{xcolor}
\usepackage{soul}
\usepackage{cancel}
\usepackage{easyReview}
\usepackage{newtxtext}       
\usepackage{circuitikz}
\usepackage{tikz}%
\usepackage[varvw]{newtxmath}       
\usepackage{xspace}
\usetikzlibrary{patterns}
\usepackage{pgfplots}
\pgfplotsset{compat=1.18}
\usepackage{pgf}

\usepackage{ulem}
\usepackage{soul} 
\newcommand{\hcancel}[1]{\ifmmode\text{\st{#1}}\else\st{#1}\fi}

\newcommand{\bmA}{\ensuremath{\mathbf{A}}}
\newcommand{\bmB}{\ensuremath{\mathbf{B}}}

\newcommand{\bmD}{\ensuremath{\mathbf{D}}}

\newcommand{\bmH}{\ensuremath{\mathbf{H}}}

\newcommand{\bmJ}{\ensuremath{\mathbf{J}}}

\newcommand{\bmR}{\ensuremath{\mathbf{R}}}

\newcommand{\bmV}{\ensuremath{\mathbf{V}}}

\newcommand{\bmX}{\ensuremath{\mathbf{X}}}

\newcommand{\bmr}{\ensuremath{\mathbf{r}}}

\newcommand{\bmu}{\ensuremath{\mathbf{u}}}

\newcommand{\bmx}{\ensuremath{\mathbf{x}}}
\newcommand{\bmy}{\ensuremath{\mathbf{y}}}

\newcommand{\R}{\mathbb{R}}

\newcommand{\ConFuncSet}{\bm{C}}

\newcommand{\Jr}{\check{\bmJ}}
\newcommand{\Rr}{\check{\bmR}}

\newcommand{\Br}{\check{\bmB}}

\newcommand{\statex}{{\bmx}}
\newcommand{\stateapprox}{\tilde{\statex}}
\newcommand{\stateRed}{\check{\statex}}

\newcommand{\outputy}{{\bmy}}

\newcommand{\outputRed}{\check{\outputy}}
\newcommand{\inputu}{\bmu}

\newcommand{\statexsnap}{{\bmX}}

\newcommand{\Hamlabel}{\mathcal{H}}
\newcommand{\HamlabelRed}{\check{\mathcal{H}}}

\newcommand{\dHmat}{\bmH}
\newcommand{\dHmatRed}{\reduce{\dHmat}}

\newcommand{\ddt}{\frac{\mathrm{d}}{{\mathrm{d}} t}}
\newcommand{\ide}{\mathbb{I}}
\newcommand{\bzero}{\ensuremath{\mathbf{0}}} 

\newcommand{\snapsize}{n_t}

\newcommand{\portsize}{m}
\newcommand{\FOMsize}{N}
\newcommand{\ROMsize}{r}
\newcommand{\FOMsizeHalf}{n}

\newcommand{\residual}{{\bmr}}

\newcommand{\funcW}{\mathcal{W}}
\newcommand{\funcJ}{\bmJ}
\newcommand{\funcR}{\bmR}

\newcommand{\approxfunc}{\varphi}

\newcommand{\reducfunc}{\mathcal{R}}



\newcommand{\pH}{\textsf{pH}\xspace}
\newcommand{\GMG}{\textsf{GMG}\xspace}
\newcommand{\DEIM}{\textsf{DEIM}\xspace}

\newcommand{\POD}{\textsf{POD}\xspace}
\newcommand{\FOM}{\textsf{FOM}\xspace}
\newcommand{\ROM}{\textsf{ROM}\xspace}
\newcommand{\FOMs}{\textsf{FOM}s\xspace}
\newcommand{\ROMs}{\textsf{ROM}s\xspace}
\newcommand{\MOR}{\textsf{MOR}\xspace}

\newcommand{\NN}{\textsf{NN}\xspace}

\newcommand\LineWidth{0.8pt}

\newcommand{\bit}{\begin{itemize}}
	\newcommand{\eit}{\end{itemize}}
\newcommand{\ben}{\begin{enumerate}}
	\newcommand{\een}{\end{enumerate}}

\newcommand{\spann}[1]{\ensuremath{\mathop{\mathrm{span}}\left( #1 \right)}}

\newcommand{\reduce}[1]{\check{#1}}

\newcommand{\MarkSize}{3pt}

\newcommand{\errorstate}{\mathrm{error}_{x,\mathrm{red}}}
\newcommand{\erroroutput}{\mathrm{error}_{y}}

\DeclareMathOperator*{\argmin}{arg\,min}%

\makeindex             

\usepackage{lineno}
\usepackage{soul}

\newcommand{\oldtext}[1]{\textcolor{red}{}}

\newcommand{\oldtextone}[1]{{\textcolor{black}{}}}

\newcommand{\mycancel}[1]{}
\setreviewsoff

\begin{document}
	
	\title*{Model reduction of port-Hamiltonian systems via neural networks} 
	\author{Silke Glas\orcidID{0000-0003-3274-1615} and\\ 
		Alexander Heinlein\orcidID{0000-0003-1578-8104} and\\
		Harald Monsuur\orcidID{0000-0001-9482-2010} and\\
		Hongliang Mu\orcidID{0009-0003-2365-2829}}
	\institute{Silke Glas, Harald Monsuur and Hongliang Mu \at Department of Applied Mathematics, University of Twente, Enschede, The Netherlands, \email{s.m.glas@utwente.nl; harald.monsuur@utwente.nl;h.l.mu@utwente.nl}
		\\
		Alexander Heinlein \at Delft Institute of Applied Mathematics, Delft University of Technology, Delft, The Netherlands, \email{a.heinlein@tudelft.nl}}
	%
	%
	\maketitle
	
	\abstract{In this paper, we consider structure-preserving model reduction of port-Hamiltonian (\pH) systems which extend classical Hamiltonian systems with dissipation and an input-output port. 
	These \pH systems are often used in multi-physics systems, as the interconnection of one or more \pH systems results again in a \pH system. 
	If particularly the system matrices associated with the interconnection and/or dissipation of a pH system are state-dependent, then the evaluation of standard reduced-order models (\ROMs) may depend on the dimension of the original full-order model, resulting in high computational costs. To circumvent these high costs, we propose to use structure-preserving neural networks.
	In particular, we perform two steps: (1) we use the generalized manifold Galerkin projection from \cite{buchfink2024model} to project the pH system onto the reduced space; then (2) we train a neural network to learn the map from the reduced-order state to the reduced-order interconnection and dissipation system matrices. To ensure that the resulting \ROM is again a \pH system, the architecture of the neural network is chosen such that the skew-symmetry and positive semi-definiteness of the reduced-order systems matrices are maintained.
	In a numerical example, we consider a nonlinear mass-spring-damper system with state-dependent system matrices. The numerical results show that the proposed method achieves a significant computational speed-up compared to the original \ROM with comparable accuracy.
	}
		  
	\section{Introduction}
	Energy-based modeling in, e.g., electrical circuits \cite{falaize2016passive}, mechanical systems \cite{Duindam2009}, and fluid dynamical systems \cite{califano2021geometric}, often results in a port-Hamiltonian (\pH) form. In this paper, we consider \pH systems of the form 
	\begin{subequations}\label{equ:Nonlinear_FOM_pH}
		\begin{align}
				\ddt \statex(t) &=\left(\funcJ(\statex(t))-\funcR(\statex(t))\right)
				\nabla_{\statex}\Hamlabel(\statex(t))
				+\bmB \inputu(t), \label{equ:Nonlinear_FOM_pH_a}\\
				\outputy(t) &=\bmB^{\top}\nabla_{\statex}\Hamlabel(\statex(t)),
				\quad
				\statex(0) = \statex_0,\label{equ:Nonlinear_FOM_pH_b}
		\end{align}
	\end{subequations}
		where  $\statex\colon[0,\infty)\rightarrow\R^{\FOMsize}$ is the state of the system with its initial value given by $\statex_0\in\R^{\FOMsize}$. The matrix-valued functions $\funcJ,\funcR\colon\R^{\FOMsize}\rightarrow\R^{\FOMsize\times \FOMsize}$
		 model the interconnection structure and dissipation of the system and are skew-symmetric and positive semi-definite, i.e., 
		\begin{equation*}
			\funcJ(\statex) = -\funcJ^{\top}(\statex) \in \R^{\FOMsize\times \FOMsize},\
			0\preceq \funcR(\statex) = \funcR^{\top}(\statex) \in \R^{\FOMsize\times \FOMsize},\quad \
			\forall \statex\in\R^{\FOMsize}.
		\end{equation*} 
		The above \pH system \eqref{equ:Nonlinear_FOM_pH} exchanges energy with the environment via the $\portsize$-dimensional ports described by $\bmB\in\R^{\FOMsize\times \portsize}$, which is often denoted as port matrix. 
		The function $\Hamlabel: \R^{\FOMsize}\rightarrow \R$, which needs to satisfy
		\begin{equation}
			\label{equ:Ham-condition}
			\exists \statex_e\in\R^{\FOMsize},\ 
			\textrm{s.t.}\
			\nabla_{\statex}\Hamlabel(\statex_e) = \bzero,\
			\textrm{and}\
			\Hamlabel(\statex)\geq\Hamlabel(\statex_e)\quad \forall \statex\in\R^{\FOMsize},
		\end{equation}
		is called the Hamiltonian and represents the internal energy of the system. 
		Finally, the functions $\inputu,\outputy:[0,\infty)\rightarrow\R^{\portsize}$ are the input and output of the system. 
		Moreover, we assume that
		\begin{equation*}
			{\rm det}(\funcJ(\statex)-\funcR(\statex))\neq 0,\quad
			\forall \statex\in\R^{\FOMsize}.
		\end{equation*}
			One property of \pH systems is, that they satisfy a power 
	balance equation \cite{vanderschaft2014port}, i.e., 
		\begin{equation*}
			\ddt\Hamlabel(\statex(t)) = \outputy(t)^{\top}\bmu(t)-
			\left(\nabla_{\statex}\Hamlabel(\statex(t))\right)^{\top}
			\funcR(\statex(t))
			\nabla_{\statex}\Hamlabel(\statex(t)).
		\end{equation*}
		The physical meaning of this power balance equation is that the change of the Hamiltonian of a pH system equals the power supplied to the system by the input-output ports minus the dissipated power of the system.
		As a result, a \pH system is passive, i.e., it does not generate energy itself.  For further details about \pH systems, we refer the reader to, e.g., \cite{chaturantabut2016structure,polyuga2012effort,vanderschaft2014port,schulze2023structure}. 

If the state dimension $\FOMsize$ of the \pH system \eqref{equ:Nonlinear_FOM_pH} is high, then a projection-based model order reduction (\MOR) method can be applied to construct a reduced-order \pH system, which has lower simulation cost. However, without taking into account the \pH structure in the reduction process, this \pH structure is not necessarily kept in the \ROM. In the following, we recall some works, where the \pH structure in the \ROM can indeed be preserved. 

A moment matching-based structure-preserving \MOR method for linear \pH system has been proposed in \cite{gugercin2012structure}.
A similar projection framework is used in \cite{schulze2023structure}, where a \MOR method for nonlinear \pH systems that uses a separable nonlinear approximation ansatz is introduced.
The authors of \cite{chaturantabut2016structure} proposed a  structure-preserving \MOR method for nonlinear \pH systems. 
This method uses linear \MOR techniques and presents a structure-preserving discrete empirical interpolation method (\DEIM) method for the nonlinear Hamiltonian.
Furthermore, a \MOR method for \pH differential-algebraic equations is introduced in \cite{schwerdtner2022structure}, and for a more general overview of \pH differential-algebraic equations including model reduction, we refer to \cite{mehrmann2023control}.
The generalized manifold Galerkin (\GMG) projection is discussed by the authors of \cite{buchfink2024model}, where as a special instance structure-preserving \MOR methods for Hamiltonian systems and Lagrangian systems are considered. Recently, the authors of \cite{GM2026structure} extended the \GMG projection to port-Hamiltonian systems. This method can use a general nonlinear approximation map and can directly be applied to nonlinear \pH systems of the form  \eqref{equ:Nonlinear_FOM_pH}.
 A nonintrusive \MOR method for \pH systems has been proposed in \cite{rettberg2025data}, where a neural network is used to learn a linear reduced-order \pH system. 

In this paper, we consider the case of state-dependent interconnection and dissipation matrices $\funcJ(\statex)$, $\funcR(\statex)$, where the dependence on the state could be even nonlinear. This means that when using projection-based \MOR methods to derive a reduced system, the reduced interconnection and dissipation matrices would still be state-dependent and thus depend on the dimension of the full-order model \FOM. One possible way to circumvent this, would be the matrix DEIM introduced in \cite{bonomi2017matrix}. However, the \pH structure is not guaranteed to be preserved by applying this algorithm. 
In this paper, we overcome this challenge by using a neural network to approximate the state-dependent system matrices. In more detail, the proposed algorithm is a two-step method: (i) a \ROM is built by using the \GMG projection; (ii) we train a \NN whose input is given by the reduced-order trajectory and whose outputs are the system matrices. This results in an approximated reduced-order \pH system whose interconnection and dissipation matrices are constructed by a \NN.
We use a \NN that has a similar structure as the \NN used in \cite{rettberg2025data} such that the \pH structure is preserved.
Different to \cite{rettberg2025data},
we focus on nonlinear reduced-order \pH systems where $\funcJ$ and $\funcR$ are state-dependent.

The outline of this paper is as follows. In Section \ref{Sec:GMG_proj}, we introduce some background of projection-based model reduction of \pH systems and the \GMG reduction. Then, the proposed structure-preserving \MOR method based on neural networks is introduced in Section \ref{Sec:GMG_NN}. Subsequently, we present numerical results of a nonlinear mass-spring-damper system in Section \ref{Sec:Num_examp}. We conclude and discuss ideas for further work in Section \ref{Sec:Conclusion_Outlook}.
		
	\section{Projection-based \MOR for \pH systems and the Generalized manifold Galerkin reduction}
	\label{Sec:GMG_proj}
	
A general framework of projection-based \MOR methods of \pH systems can be derived as follows. First, an approximation map $\approxfunc\in\ConFuncSet^1(\R^{\ROMsize},\R^{\FOMsize})$ of the state, with $\ROMsize\ll\FOMsize$, is chosen such that 
\begin{equation}
	\label{equ:state_approximation} 
	\stateapprox(t) \coloneq\approxfunc(\stateRed(t)) \approx\statex(t), 
\end{equation}
where $\stateRed(t) \in \R^{r}$ are the reduced coordinates and $\stateapprox(t) \in \R^{N}$ is an approximated solution, also known as the reconstructed solution. 
Due to the chain rule, the evolution in time of the reconstructed solution $\stateapprox$ is
$\ddt\stateapprox(t)=\bmD_{\stateRed}\approxfunc(\stateRed)\ddt\stateRed(t)$, where $\bmD_{\stateRed}\approxfunc(\stateRed)$ denotes the Jacobian of $\approxfunc$. For readability, from now on, we drop the dependence on $\stateRed$ and denote this Jacobian by $\bmD\approxfunc$.
Then, we define the time-continuous residual of the \FOM \eqref{equ:Nonlinear_FOM_pH} with respect to the approximation $\stateapprox(t)$ by 
\begin{align*}
	\residual(t)
	&\coloneq
	\ddt \stateapprox(t)
	-
	\left(\funcJ( \stateapprox(t))-\funcR( \stateapprox(t))\right)
	\nabla_{\stateapprox}\Hamlabel( \stateapprox(t))
	-
	\bmB \inputu(t) \\
	&= 
	\bmD_{\stateRed}\approxfunc(\stateRed)\ddt\stateRed(t)
	-
	\left(\funcJ( \approxfunc(\stateRed(t)))-\funcR(\approxfunc(\stateRed(t)))\right)
	\nabla_{\stateapprox}\Hamlabel( \approxfunc(\stateRed(t)))
	-
	\bmB \inputu(t). 
\end{align*}
Further, we assume 
	that 
\begin{equation}
	\label{equ:non_singular_reduce_JR}
	\det\left(
	\bmD\approxfunc^{\top}
	(\funcJ(\approxfunc(\stateRed))-\funcR(\approxfunc(\stateRed))
	\bmD\approxfunc
	\right)
	\neq
	0,\quad
	\forall
	\stateRed\in\R^{\ROMsize}.
\end{equation}
Based on the idea of the generalized manifold Galerkin (\GMG) reduction \cite{buchfink2024model}, we construct a reduction map $\reducfunc_{\stateRed}\colon \statex\mapsto \funcW(\stateRed)^{\top}\statex$ with $\funcW\colon\R^{\ROMsize}\mapsto\R^{\FOMsize\times\ROMsize}$ given by
\begin{equation}
	\label{equ:GMG_proj_basis}
	\funcW\colon 
	\stateRed\mapsto
	(\funcJ(\approxfunc(\stateRed))-\funcR(\approxfunc(\stateRed)))^{-\top}
	\bmD\approxfunc
	(\bmD\approxfunc^{\top}(\funcJ(\approxfunc(\stateRed))-\funcR(\approxfunc(\stateRed))^{-\top} \bmD\approxfunc)^{-1}.
\end{equation} 
Subsequently, we derive a \ROM by assuming, that the projection of the time-continuous residual vanishes, i.e., 
\begin{equation}
	\label{equ:residual_vanish}
	\funcW(\stateRed(t))^{\top}\residual(t)\overset{!}{=}0.
\end{equation}
By reformulating \eqref{equ:residual_vanish}, we arrive at
\begin{equation}
	\label{equ:ROM_dif_part}
	\begin{array}{rcl}
		\ddt \stateRed(t)
		=
		\left(\reduce{\funcJ}( \stateRed(t))-
		\reduce{\funcR}( \stateRed(t))\right)
		\bmD\approxfunc^{\top}
		\nabla_{\stateapprox(t)}\Hamlabel(\stateapprox(t))
		-
		\funcW(\stateRed(t))^{\top}	\bmB \inputu(t),
	\end{array}
\end{equation}
where 
$\reduce{\funcJ}\colon\stateRed\mapsto
\funcW(\stateRed)^{\top}\funcJ(\approxfunc(\stateRed))\funcW(\stateRed),
\
\reduce{\funcR}\colon\stateRed\mapsto
\funcW(\stateRed)^{\top}\funcR(\approxfunc(\stateRed))\funcW(\stateRed).$
Meanwhile, the output of the system is given by 
\begin{equation}
	\label{equ:ROM_output_part}
	\outputRed(t)\coloneq\bmB^{\top}\nabla_{\approxfunc(\stateRed)}\Hamlabel(\approxfunc(\stateapprox(t))).
\end{equation}
In this work, we consider linear approximation maps only, i.e., $\approxfunc\colon\stateRed\mapsto \bmV \stateRed $, where $\bmV\in\R^{\FOMsize\times\ROMsize}$. The following theorem shows how the \pH structure is preserved.
\begin{theorem}[Reduced-order \pH systems with \GMG reduction (see Theorem 3.1 in \cite{GM2026structure}.]
	\label{theo:GMG_state_dependent_JR}
	Consider a \pH system \eqref{equ:Nonlinear_FOM_pH}. 
	Let $\approxfunc\colon\stateRed\mapsto \bmV \stateRed $, with $\bmV\in\R^{\FOMsize\times\ROMsize}$ be an approximation map such that $\spann{\bmB}\subseteqq\spann{\bmV}$. Further, we consider the reduced initial condition $\stateRed(0)=\bmV^{\dagger}\statex_0$, where the notation $(\cdot)^{\dagger}$ represents the Moore-Penrose pseudoinverse.
	If the assumption given in \eqref{equ:non_singular_reduce_JR} holds,
	then \eqref{equ:ROM_dif_part} together with \eqref{equ:ROM_output_part} yields a (reduced) \pH system, with initial value $\stateRed(0)$.	

\end{theorem}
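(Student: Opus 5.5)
The plan is to exhibit \eqref{equ:ROM_dif_part}--\eqref{equ:ROM_output_part} explicitly in the form \eqref{equ:Nonlinear_FOM_pH}. The reduced ingredients would be the Hamiltonian $\HamlabelRed(\stateRed) \coloneq \Hamlabel(\bmV\stateRed)$, the matrices $\reduce{\funcJ}, \reduce{\funcR}$ as defined after \eqref{equ:ROM_dif_part}, and a reduced port matrix $\Br$ to be identified. By the chain rule, $\nabla_{\stateRed}\HamlabelRed(\stateRed)=\bmV^{\top}\nabla_{\stateapprox}\Hamlabel(\bmV\stateRed)$, and $\bmD\approxfunc=\bmV$. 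Hence the drift term in \eqref{equ:ROM_dif_part} is already $(\reduce{\funcJ}-\reduce{\funcR})\nabla_{\stateRed}\HamlabelRed$. The structural properties are immediate from congruence: $\reduce{\funcJ}=\funcW^{\top}\funcJ\funcW$ is skew-symmetric because $\funcJ$ is, and $\reduce{\funcR}=\funcW^{\top}\funcR\funcW$ is symmetric positive semi-definite because $\funcR$ is.

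Before that, I would verify two algebraic identities for $\funcW$ from \eqref{equ:GMG_proj_basis}, which is well defined by \eqref{equ:non_singular_reduce_JR}. Write $\bmA=\funcJ-\funcR$ and $\bmx{M}=\bmV^{\top}\bmA^{-\top}\bmV$, so that $\funcW=\bmA^{-\top}\bmV\bmx{M}^{-1}$. Then:
\begin{itemize}
\item $\funcW^{\top}\bmV=\bmx{M}^{-\top}\bmV^{\top}\bmA^{-1}\bmV=\bmx{M}^{-\top}\bmx{M}^{\top}=\ide$;
\item $\funcW^{\top}\bmA=\bmx{M}^{-\top}\bmV^{\top}$, and hence $\funcW^{\top}\bmA\funcW=\bmx{M}^{-\top}\bmV^{\top}\funcW=\bmx{M}^{-\top}$.
\end{itemize}
Combining these gives $(\reduce{\funcJ}-\reduce{\funcR})\bmV^{\top}=\funcW^{\top}\bmA\funcW\bmV^{\top}=\bmx{M}^{-\top}\bmV^{\top}=\funcW^{\top}\bmA$. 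So the ROM obtained from the vanishing residual \eqref{equ:residual_vanish}, namely $\funcW^{\top}\bmV\dot{\stateRed}=\funcW^{\top}\bmA\nabla\Hamlabel+\funcW^{\top}\bmB\inputu$, coincides with \eqref{equ:ROM_dif_part}. Along the way I would check the sign convention of the input term.

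For the port, I would use $\spann{\bmB}\subseteq\spann{\bmV}$ to write $\bmB=\bmV\bmx{C}$ for some constant $\bmx{C}\in\R^{\ROMsize\times\portsize}$. Then $\funcW(\stateRed)^{\top}\bmB=\funcW^{\top}\bmV\bmx{C}=\bmx{C}$ is state-independent, so I set $\Br\coloneq\bmx{C}=\bmV^{\dagger}\bmB$. The output then becomes $\outputRed=\bmB^{\top}\nabla\Hamlabel(\bmV\stateRed)=\bmx{C}^{\top}\bmV^{\top}\nabla\Hamlabel=\Br^{\top}\nabla_{\stateRed}\HamlabelRed$, which is exactly the collocated output structure. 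This step is where the span assumption is essential: without it $\funcW^{\top}\bmB$ is state-dependent and fails to be the transpose of the output map. The initial value $\stateRed(0)=\bmV^{\dagger}\statex_0$ is simply taken as given.

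The main obstacle I expect is the Hamiltonian condition \eqref{equ:Ham-condition} for $\HamlabelRed$. Boundedness from below, $\HamlabelRed\ge\Hamlabel(\statex_e)$, is inherited trivially. However, existence of a reduced minimizer $\stateRed_e$ with $\nabla\HamlabelRed(\stateRed_e)=\bzero$ is not automatic. My first attempt would be to argue that if $\statex_e\in\spann{\bmV}$, then $\stateRed_e=\bmV^{\dagger}\statex_e$ works, since then $\nabla\HamlabelRed(\stateRed_e)=\bmV^{\top}\nabla\Hamlabel(\statex_e)=\bzero$; for example, this holds when $\statex_e=\bzero$. Otherwise I would invoke coercivity of $\HamlabelRed$ to obtain a global minimizer, or state this as a mild standing assumption as in \cite{GM2026structure}. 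With this settled, nonsingularity of $\reduce{\funcJ}-\reduce{\funcR}=\bmx{M}^{-\top}$ also follows, which completes the \pH form.
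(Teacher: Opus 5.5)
Your core computation is correct, and it is what the paper's one-line proof (a deferral to Theorem~3.1 of \cite{GM2026structure}) leaves implicit. It rests on $\funcW^{\top}\bmV=\ide$ and $\funcW^{\top}(\funcJ-\funcR)\funcW=\mathbf{M}^{-\top}=\bigl(\bmV^{\top}(\funcJ-\funcR)^{-1}\bmV\bigr)^{-1}$, which is the same identity the paper uses in Section~\ref{Sec:GMG_NN}. Congruence then gives skew-symmetry and semi-definiteness, and the span condition makes $\funcW^{\top}\bmB=\mathbf{C}$ constant, so input and output are collocated. Your residual-based derivation also settles the sign you wanted to check: \eqref{equ:residual_vanish} yields $+\funcW^{\top}\bmB\inputu$, so the minus sign in \eqref{equ:ROM_dif_part} is a misprint. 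With that minus sign the input matrix would be $-\Br$ while the output is $\Br^{\top}\nabla_{\stateRed}\HamlabelRed$, which is not \pH form.

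Two points need fixing; both come from the statement rather than your method.

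First, $\funcW$ is \emph{not} well defined by \eqref{equ:non_singular_reduce_JR} as printed. That condition asks for $\bmV^{\top}(\funcJ-\funcR)\bmV$ to be invertible, whereas your argument needs $\mathbf{M}=\bmV^{\top}(\funcJ-\funcR)^{-\top}\bmV$ invertible, and neither implies the other. By Jacobi's complementary-minor identity, invertibility of $\bmV^{\top}\bmA^{-1}\bmV$ is governed by the compression of $\bmA$ to $\spann{\bmV}^{\perp}$. For a concrete case, take $\funcR=\bzero$, $\bmV=[\mathbf{e}_1,\mathbf{e}_2]$, and $\funcJ=\mathbf{K}-\mathbf{K}^{\top}\in\R^{4\times 4}$ with $\mathbf{K}=\mathbf{e}_1\mathbf{e}_2^{\top}+\mathbf{e}_1\mathbf{e}_3^{\top}+\mathbf{e}_2\mathbf{e}_4^{\top}$. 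Then $\det\funcJ=1$ and $\bmV^{\top}\funcJ\bmV$ is invertible, but $\funcJ\mathbf{e}_3=\mathbf{e}_1$ and $\funcJ\mathbf{e}_4=\mathbf{e}_2$, hence $\bmV^{\top}\funcJ^{-1}\bmV=\bzero$. You should therefore take $\det\mathbf{M}\neq 0$ as the hypothesis, which is evidently the intended one. It also gives $\mathrm{rank}\,\bmV=\ROMsize$, which you use tacitly for $\mathbf{C}=\bmV^{\dagger}\bmB$.

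Second, the equilibrium part of \eqref{equ:Ham-condition} for $\HamlabelRed$ cannot be derived, and coercivity is not among the hypotheses. Consider $\Hamlabel(\statex)=\frac{1}{2}\statex_1^2+(\statex_3-1)^2/(1+\statex_2^2)$ on $\R^3$, which satisfies \eqref{equ:Ham-condition} with $\statex_e=\mathbf{e}_3$. Take $\bmV=[\mathbf{e}_1,\mathbf{e}_2]$ and, say, $\funcJ=\bzero$, $\funcR=\ide_3$, $\bmB=\mathbf{e}_1$. The reduced Hamiltonian is $\HamlabelRed(\stateRed)=\frac{1}{2}\stateRed_1^2+1/(1+\stateRed_2^2)$, which has no minimizer. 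So this is a missing hypothesis rather than a missing step. If you assume $\statex_e\in\spann{\bmV}$, which is automatic for $\statex_e=\bzero$ as in Section~\ref{Sec:Num_examp}, your choice $\stateRed_e=\bmV^{\dagger}\statex_e$ completes the proof.
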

\begin{proof}
	Although, we consider \pH systems with explicit state-dependent $\funcJ$ and $\funcR$, the proof of this theorem is analogous to that of Theorem 3.1 in \cite{GM2026structure}.
\end{proof}

In this work, we would like to build the approximation map $\approxfunc$ by a data-driven method.
Given a high-dimensional \pH system \eqref{equ:Nonlinear_FOM_pH} and a time discretization $\left\{t_0,t_1,\ldots,t_{\snapsize}\right\}$, we denote the trajectory of this system by 
\begin{equation*}
	\bmX = 
	\begin{pmatrix}
		\statex(t_0),\statex(t_1),\ldots,\statex(t_{\snapsize})
	\end{pmatrix}
	\in\R^{\FOMsize\times(\snapsize+1)}.
\end{equation*}
Then, for a given reduced-order $\ROMsize$ with  $\portsize < \ROMsize \ll \FOMsize)$, 
a linear approximation map $\approxfunc\colon\stateRed\mapsto \bmV \stateRed $ is built, where $\bmV\in\R^{\FOMsize\times\ROMsize}$ is constructed by
\begin{equation}
	\label{equ:GMG_approx_basis}
	\bmV=
	\begin{pmatrix}
		\bmB,\bmV_{\rm POD}
	\end{pmatrix},
	\  
	\bmV_{\mathrm{ POD}}
	=\argmin_{\bmA\in\R^{\FOMsize\times(\ROMsize-\portsize)}, \bmA^{\top}\bmA=\ide_{\ROMsize-\portsize}}
	\|
	(\ide_{\FOMsize}-\bmA\bmA^{\top})(\bmX-\bmB\bmB^{\dagger}\bmX)
	\|_F.
\end{equation}
	The conditions for $\approxfunc$ of Theorem \ref{theo:GMG_state_dependent_JR} are satisfied by the above construction. 
	In this work, we denote the resulting reduced-order \pH system by \GMG-\POD-\ROM.

	\section{Neural network-learned reduced-order pH systems}
	\label{Sec:GMG_NN}
	
	Our ultimate goal is to design a \ROM that preserves the \pH structure and reduces the computational cost of simulating the original \FOM. For this matter, it is crucial that the evaluation of the reduced-order system does not depend on the dimension of the \FOM. For \pH problems with constant $\funcJ, \funcR$ and a quadratic Hamiltonian, this is easily achieved as the matrices arising in the reduced \pH system are of small dimension and constant and can hence be pre-computed.

	The situation becomes more delicate for the state-dependent problem we consider here. 
	The reduced port matrix $\Br$ remains constant and possible nonlinear reduced Hamiltonian $\HamlabelRed$ could be evaluated independent of the dimension of the \FOM, for example, by using a structure-preserving \DEIM. However, the evaluation of the reduced interconnection and dissipation matrices $\reduce{\funcJ}$ and $\reduce{\funcR}$ still depends on the dimension of the \FOM. In fact, from 
	\begin{equation*}
		\begin{array}{rl}
			  (\reduce{\funcJ}(\stateRed)-\reduce{\funcR}(\stateRed))
			=&\funcW(\stateRed)^{\top}
			     \left(
			     \funcJ(\bmV\stateRed)-\funcR(\bmV\stateRed)
			     \right)
			     \funcW(\stateRed)\\
			 =&
			   \left(
			   \bmV^{\top}
			   \left(
			   \funcJ(\bmV\stateRed)-\funcR(\bmV\stateRed)
			   \right)^{-1}
			   \bmV
			   \right)^{-1},
		\end{array}
	\end{equation*}
	we see that the evaluation of $\reduce{\funcJ}(\stateRed)-\reduce{\funcR}(\stateRed)$ requires the inversion of a $\FOMsize\times\FOMsize$ matrix, which can be very expensive.
	In principle, one could approximate $\Jr$ and $\Rr$ using some method of interpolation. However, ensuring that the approximation to $\Rr$ is positive semi-definite can be quite challenging.

	Inspired by \cite{rettberg2025data}, we tackle this challenge by defining a neural network that in turn defines a class of skew-symmetric and positive semi-definite matrices. To be precise, we train a network $\Theta_{JR}\colon\R^{\ROMsize}\rightarrow\R^{\ROMsize\times \ROMsize}$, which, by considering the the lower triangular and strictly upper triangular parts of its output, defines two mappings $\Theta_R,\Theta_J\colon\R^{\ROMsize}\rightarrow\R^{\ROMsize\times \ROMsize}$. 
	Then we define $\Phi_J$ and $\Phi_R$ by
	\begin{equation}
		\label{equ:NN_sturcture}
		\Phi_J\colon \stateRed\mapsto \Theta_J(\stateRed)-\Theta_J(\stateRed)^{\top},\quad
		\Phi_R\colon \stateRed\mapsto \Theta_R(\stateRed)\Theta_R(\stateRed)^{\top}.
	\end{equation}
	By construction, we have that $\Phi_J$ is skew-symmetric and $\Phi_R$ is symmetric positive semi-definite, i.e.,
	\begin{equation*}
		\Phi_J(\stateRed)=-\Phi_J(\stateRed)^{\top},
		\quad
		\Phi_R(\stateRed)=\Phi_R(\stateRed)^{\top}\succeq 0,
		\quad
		\forall
			\stateRed\in\R^{\ROMsize}.
	\end{equation*}
	As a result, this neural network allows the approximation of $\Jr$ and $\Rr$ by $\Phi_J$ and $\Phi_R$ respectively, while maintaining the \pH structure in the \ROM, which is denoted by \GMG-\POD-\NN-\ROM. 

	To train the network $\Theta_{JR}$, we need to arrange its input and output data. With a snapshot matrix $\statexsnap\in\R^{\FOMsize\times(\snapsize+1)}$ and a projection basis $\bmV$, we construct the reduced snapshot matrix by $\reduce{\statexsnap}\coloneq\bmV^{\dagger}\statexsnap\in\R^{\ROMsize\times(\snapsize+1)}$.
	For $k =1,\ldots, \snapsize+1$, we denote $\stateRed_{k}$ as the $k-$th column of $\reduce{\statexsnap}$ and
	\begin{equation*}
		\reduce{\bmJ}_k:=\reduce{\funcJ}(\stateRed_{k}),
		\quad
		\reduce{\bmR}_k:=\reduce{\funcR}(\stateRed_{k}),
		\quad
		\bmD\dHmatRed_k:=\bmV^{\top}\nabla_{\statex}\Hamlabel(\bmV \stateRed_{k}).
	\end{equation*}

	To improve numerical stability and performance of the neural network, we apply standardization \cite{shanker1996effect} to the input and output data. However, if a standardization is applied to $\Phi_J$ and $\Phi_R$ directly, then the structure of the \pH system is destroyed. Thus, we apply a standardization on $\Theta_J$ and $\Theta_R$, which is applied in the following way. 
	
	First, we denote
	\begin{equation*}
		\label{equ:pre_normalization_mats}
		\reduce{\bmJ}_{\mathrm{triu}, k}
		\coloneq
		\mathrm{triu}
		(\reduce{\bmJ}_k),\quad
		\reduce{\bmR}_{\mathrm{chol}, k}
		\coloneq
		\mathrm{chol}
		(\reduce{\bmR}_k+10^{-14}\ide_{\ROMsize})
		\footnote{$\reduce{\bmR}_k$ can be a singular matrix. Thus, we compute the Cholesky decomposition of $(\reduce{\bmR}_k+10^{-14}\ide_{\ROMsize})$ instead.}
		,\quad
		k=1,\ldots,\snapsize+1,
	\end{equation*} 
	where $``\mathrm{triu}"$ and $``\mathrm{chol}"$ are two operators that return square matrices which is the strictly upper-triangular part and a matrix of the Cholesky decomposition of a matrix respectively.  
	Then we build $\reduce{\bmJ}_{\mathrm{triu}, \mathrm{mean}}, \reduce{\bmJ}_{\mathrm{triu}, \mathrm{std}}$ as the element-wise mean value and standard variance of $\{\reduce{\bmJ}_{\mathrm{triu},k}\}_{k=1}^{\snapsize+1}$. Similarly, we build $\reduce{\bmR}_{\mathrm{chol},  \mathrm{mean}}, \reduce{\bmR}_{\mathrm{chol},\mathrm{std}}$ based on $\{\reduce{\bmR}_{\mathrm{chol},k}\}_{k=1}^{\snapsize+1}$.
	With these matrices, we define $\widetilde{\Theta}_J$ and $\widetilde{\Theta}_R$ by
	\begin{equation*}
		\label{equ:def_normalization_Theta}
		\widetilde{\Theta}_J \colon
		\stateRed \mapsto
		\reduce{\bmJ}_{\mathrm{triu}, \mathrm{mean}}
		+
		\reduce{\bmJ}_{\mathrm{triu}, \mathrm{std}}*\Theta_J(\stateRed),
		\quad
		\widetilde{\Theta}_R \colon
		\stateRed \mapsto
		\reduce{\bmR}_{\mathrm{chol},  \mathrm{mean}}
		+
		\reduce{\bmR}_{\mathrm{chol}, \mathrm{std}}
		*\Theta_R(\stateRed),
	\end{equation*}
	where ``*" represents element-wise multiplication.
	After that, we replace $\Theta_J$ and $\Theta_R$ in \eqref{equ:NN_sturcture} by $\widetilde{\Theta}_J$ and $\widetilde{\Theta}_R$. 

	Finally, we train the network $\Theta_{JR}$ with a loss function defined as follows:
	\begin{equation}
		\label{equ:loss_func}
		L(\theta) = \lambda L_{\bmJ,\bmR}(\theta) + (1-\lambda) L_{\mathrm{rhs}}(\theta),
	\end{equation}
	where $\lambda\in(0,1)$,
	 $\theta$ represents the hyper-parameters of $\Theta_{JR}$, and $L_{\bmJ,\bmR}(\theta)$ and $L_{\mathrm{rhs}}(\theta)$ are two loss terms defined by
	\begin{align*}
		L_{\bmJ,\bmR}(\theta) &= \frac{1}{\ROMsize^2|\chi|}\sum_{k\in\chi}\|(\Phi_J(\stateRed_{k})-\Phi_R(\stateRed_{k})) - (\Jr_k-\Rr_k))\|_F^2,
		\\
		L_{\mathrm{rhs}}(\theta) &=  \frac{1}{\ROMsize|\chi|}\sum_{k\in\chi}				\|\left((\Phi_J(\stateRed_{k})-\Phi_R(\stateRed_{k})) - (\Jr_k-\Rr_k))\right) \bmD \dHmatRed_k\|_2^2,
	\end{align*}
	where $\chi$ represents the training set or the test set.
	The loss function $L_{\bmJ,\bmR}$ ensures that $(\reduce{\funcJ}-\reduce{\funcR})$ can be approximated closely by $(\Phi_J-\Phi_R)$, and the dynamics of the system are approximated based on $L_{\mathrm{rhs}}$.

	\section{Numerical example}
	\label{Sec:Num_examp}

	In this section, we consider a nonlinear mass-spring-damper system with state-dependent dissipation matrix $\funcR$, 
	which is similar to the example in \cite{kawano2018structure}.
	This system is depicted in Figure \ref{fig:NLmsd_sys}, where $\xi_i \in \R$ denotes the position of the mass $m_i \in \R^{+}$.
	\begin{figure}[h]
		\centering
		\begin{circuitikz}[scale = 0.65][H]
			\def\masshalfheight{0.75}   
			\def\wallhalfheight{1.0} 
			\def\wallwidth{0.25} 
			\def\masswidth{1.5}    
			\def\dampwidth{3.0}
			\def\cdotswidth{2.0}
			\def\damppos{-0.5}
			\def\springpos{0.5}
			\def\springscale{2}
			\def\damplabdis{5mm}
			\def\springlabdis{1mm}
			\tikzset{every node/.style={font=\normalsize}}
			\tikzset{every path/.style={line width=1pt}}
			
			\draw[->] (-1.5,0) -- (0,0);
			\node at (-0.5,0.5) {$u(t)$};
			\draw[fill=gray!40] (0, -\masshalfheight) rectangle (\masswidth,\masshalfheight);
			\node at (0.5*\masswidth, 0) {$m_{1}$};
			
			\draw (\masswidth,\springpos) to[spring, l=$k_1+k_2(\xi_{2}-\xi_{1})^2$, label distance=\springlabdis] (\masswidth+\dampwidth, \springpos);
			\draw (\masswidth,\damppos) to[damper, l_=$c_1+c_2(\dot{\xi}_{2}-\dot{\xi}_1)^2$, label distance=\damplabdis] (\masswidth+\dampwidth, \damppos);
			
			\draw[fill=gray!40] (\masswidth+\dampwidth, -\masshalfheight) rectangle (2*\masswidth+\dampwidth,\masshalfheight);
			\node at (1.5*\masswidth+\dampwidth, 0) {$m_{2}$};
			
			\draw (2*\masswidth+\dampwidth,\springpos) to[spring, l=$k_1+k_2(\xi_{3}-\xi_{2})^2$, label distance=\springlabdis] (2*\masswidth+2*\dampwidth, \springpos);
			\draw (2*\masswidth+\dampwidth,\damppos) to[damper, l_=$c_1+c_2(\dot{\xi}_{3}-\dot{\xi}_2)^2$, label distance=\damplabdis] (2*\masswidth+2*\dampwidth, \damppos);
			
			\node at (2*\masswidth+2*\dampwidth+0.5*\cdotswidth, 0) {\Large$\cdots$};
			
			\draw[fill=gray!40] (2*\masswidth+2*\dampwidth+\cdotswidth, -\masshalfheight) rectangle (3*\masswidth+2*\dampwidth+\cdotswidth,\masshalfheight);
			\node at (2.5*\masswidth+2*\dampwidth+\cdotswidth, 0) {$m_{\FOMsizeHalf}$};
			
			\draw (3*\masswidth+2*\dampwidth+\cdotswidth,\springpos) to[spring, l=$k_1$, label distance=\springlabdis] (3*\masswidth+3*\dampwidth+\cdotswidth, \springpos);
			\draw (3*\masswidth+2*\dampwidth+\cdotswidth,\damppos) to[damper, l_=$c_1$, label distance=\damplabdis] (3*\masswidth+3*\dampwidth+\cdotswidth, \damppos);
			
			\pattern[pattern=north east lines] (3*\masswidth+3*\dampwidth+\cdotswidth,-\wallhalfheight) rectangle (3*\masswidth+3*\dampwidth+\cdotswidth+\wallwidth,\wallhalfheight);
			\draw[thick] (3*\masswidth+3*\dampwidth+\cdotswidth,-\wallhalfheight) -- (3*\masswidth+3*\dampwidth+\cdotswidth,\wallhalfheight);
		\end{circuitikz}
		\caption{Illustration of mass-spring-damper system with nonlinear spring and damper.}
		\label{fig:NLmsd_sys}
	\end{figure}
	For $i=1,\ldots,\FOMsizeHalf-1$,  the masses $m_i $ and $m_{i+1}$ are connected by
	a nonlinear spring and a nonlinear damper with spring and damper coefficients given by
	\begin{equation*}
		k_1+k_2(\xi_{i+1}-\xi_{i})^2,\
		c_1+c_2(\dot{\xi}_{i+1}-\dot{\xi}_{i})^2,
	\end{equation*}
	where $k_1,k_2,c_1,c_2\geq 0.$
	The last mass $m_{\FOMsizeHalf}$ is connected to a wall with a linear spring and damper, with spring and damper coefficients given by $k_1$ and $c_1$, respectively. 
	The internal energy of this system consists of the kinetic and potential energy. 
	
	Now, we set $\statex=
	\begin{pmatrix}
		\xi,\ldots,\xi_{\FOMsizeHalf},\dot{\xi}_1,\ldots,\dot{\xi}_{\FOMsizeHalf}
	\end{pmatrix}^{\top}$
	to be the system state and arrive at the following \pH system
	\begin{align*}
		\ddt \statex(t) &= (\bmJ-\funcR(\statex(t)))\nabla_{\statex}\Hamlabel(\statex(t))+\bmB\bmu(t),\\
		\outputy(t) &= \bmB^{\top}\nabla_{\statex}\Hamlabel(\statex(t)),
		\quad
		\statex(0)=\bzero_{2\FOMsizeHalf,1},
	\end{align*}
	where we have $\Hamlabel\colon\R^{2\FOMsizeHalf}\rightarrow\R$, and $\funcR\colon\R^{2\FOMsizeHalf}\rightarrow\R^{2\FOMsizeHalf\times 2\FOMsizeHalf}$ with 
	\begin{align*}
	   \Hamlabel(\statex)&= 
		\sum_{i=1}^{\FOMsizeHalf}\frac{1}{2}m_i\statex_{\FOMsizeHalf+i}^2
		+\sum_{i=1}^{\FOMsizeHalf-1}
		(\frac{1}{2}k_1 (\statex_{i+1}-\statex_{i})^{2} +\frac{1}{4}k_2 (\statex_{i+1}-\statex_{i})^4)
		+
		\frac{1}{2}k_1 \statex_{\FOMsizeHalf}^{2},\\
		\bmB &= \left[\begin{array}{c}
			\bzero_{\FOMsizeHalf,1}\\
			1\\
			\bzero_{\FOMsizeHalf-1,1}
		\end{array}\right],
		\quad
		\bmJ= \left[\begin{array}{cc}
			\bzero_{\FOMsizeHalf,\FOMsizeHalf} & \ide_{\FOMsizeHalf}\\
			-\ide_{\FOMsizeHalf} & \bzero_{\FOMsizeHalf,\FOMsizeHalf}
		\end{array}\right],
		\quad
		\funcR(\statex) = \left[\begin{array}{cc}
			\bzero_{\FOMsizeHalf,\FOMsizeHalf} & \bzero_{\FOMsizeHalf,\FOMsizeHalf}\\
			\bzero_{\FOMsizeHalf,\FOMsizeHalf} & \funcR_{22}(\statex)
		\end{array}\right].
	\end{align*}
	The matrix $\funcR_{22}(\statex) \in \R^{\FOMsizeHalf\times \FOMsizeHalf}$ is given by
		\begin{align*}
		\funcR_{22}(\statex)=
		\left[
		\begin{array}{ccccc}
			d_{1}(\statex)& -d_{1}(\statex) & 0 & \cdots &\cdots\\
			-d_{1}(\statex) & d_{1}(\statex) + d_{2}(\statex) & -d_{2}(\statex) & 0 & \cdots\\
			\vdots & \ddots &\ddots&\ddots & \vdots\\
			0&\cdots & -d_{\FOMsizeHalf-2}(\statex) & d_{\FOMsizeHalf-2}(\statex) + d_{\FOMsizeHalf-1}(\statex) & -d_{\FOMsizeHalf-1}(\statex)\\
			0&\cdots & 0 & -d_{\FOMsizeHalf-1}(\statex) & d_{\FOMsizeHalf-1}(\statex)
		\end{array}
		\right]
	\end{align*}
	with $d_{i}(\statex) = c_1 + c_2(\statex_{\FOMsizeHalf+i}-\statex_{\FOMsizeHalf+i})^2/4.$
	
	In this work, we use $\FOMsizeHalf=200$ resulting in a \FOM of dimension $ 2\FOMsizeHalf = 400$. For the spring and damper coefficients, we set $k_1 = k_2 = c_1 = c_2 =1$ and for the masses, we utilize $m_1=\ldots=m_{\FOMsizeHalf}=1$. The \FOM is simulated on a time interval of $(0,10]$. We use a uniform grid of the time interval with step size $\Delta t = 2\times 10^{-2}$, which results in the number of snapshots to be $\snapsize=500$. We use the $4$-th order Gauss-Legendre method for the time integration, which preserves the \pH structure \cite{kotyczka2018discrete}.
	Similarly as in \cite{kawano2018structure}, for the input, we use the sinusoidal input $u(t)=10*(\sin(t)+\sin(2t))$.
	For the settings of the neural network, we use the ReLU activation function and set LR = $3\times 10^{-3}$, WD $= 10^{-4}$, epochs$ = 10^3$, $\lambda=0.1$.
	\footnote{The hyper-parameters LR, WD, $\lambda_1$ and $\lambda_2$ are selected by a grid search. For the numerical example we use, we found that the performance of the \NN is not sensitive to these hyper-parameters.}
	
\textbf{Construction of \ROM:} For the computation of the \ROM, we first compute a linear approximation map $\approxfunc\colon\stateRed\mapsto\bmV\stateRed$, where $\bmV$ is computed as in \eqref{equ:GMG_approx_basis}. Then, 
    to ensure that we have a sufficiently large dataset to train the \NN, we build an extended snapshot matrix by 
    		\begin{equation*}
    				\reduce{\statexsnap}_{\rm ex} =
    				\begin{pmatrix}
    						\reduce{\statexsnap},
    						\reduce{\statexsnap}_{1},
    						\reduce{\statexsnap}_{2},
    						\reduce{\statexsnap}_{3},
    						\reduce{\statexsnap}_{\rm per}
    					\end{pmatrix},
    		\end{equation*}
    		where 
    		\begin{align*}
    				\reduce{\statexsnap}_{1} &= \frac{3}{4} \reduce{\statexsnap}[:,1:\snapsize] +\frac{1}{4} \reduce{\statexsnap}[:,2:\snapsize+1],\\
    				\reduce{\statexsnap}_{2} &= \frac{1}{2} \reduce{\statexsnap}[:,1:\snapsize] +\frac{1}{2} \reduce{\statexsnap}[:,2:\snapsize+1],\\
    				\reduce{\statexsnap}_{3} &= \frac{1}{4} \reduce{\statexsnap}[:,1:\snapsize] +\frac{3}{4} \reduce{\statexsnap}[:,2:\snapsize+1],\\
    				\reduce{\statexsnap}_{\rm per} &= 
    				\begin{pmatrix}
    						\reduce{\statexsnap},
    						\reduce{\statexsnap}_{1},
    						\reduce{\statexsnap}_{2},
    						\reduce{\statexsnap}_{3}
    					\end{pmatrix}
    				+ \epsilon\rm{randn}(\ROMsize, 4\snapsize + 1).
    			\end{align*}
    			The term $\reduce{\statexsnap}_{\rm per}$ is a perturbed input that improves the robustness of the \NN
    			and we set $\epsilon=2\times10^{-2}$.
    			The loss function is evaluated on the test set every 10 epochs, and the training stops if the test loss does not improve over 5 consecutive evaluations to avoid overfitting.

\textbf{The influence of the nonlinear damper:}
In the following, we investigate the effect of the nonlinear damping term by comparing the \FOM outputs for $c_2=1$ and $c_2=0$. The case $c_2=0$ corresponds to neglecting the nonlinear damping term. 
Figure \ref{fig:MSD_comp_FOM} depicts the behavior of the two  different above described \FOMs. The relative difference of these two \FOMs are
\begin{align*}
	\small
	&\mathrm{error}_{\hat{\statex}}
	=
	\sqrt{
		\frac{\sum_{i=0}^{\snapsize}
			\|\statex(t_i)-\hat{\statex}(t_i))\|_2^2
		}
		{
			\sum_{i=0}^{\snapsize}
			\|\statex(t_i)\|_2^2
		}
	}
	=
	9.51\times10^{-2}
	,&\\
	&\mathrm{error}_{\hat{\outputy}}
	=
	\sqrt{
		\frac{\sum_{i=0}^{\snapsize}
			\|\outputy(t_i)-\hat{\outputy}(t_i)\|_2^2
		}
		{
			\sum_{i=0}^{\snapsize}
			\|\outputy(t_i)\|_2^2
		}
	}
	=
	1.85\times10^{-1}
	,&
\end{align*}
where $\statex, \outputy$ are state and output of the \FOM with $c_2=1$, and $\hat{\statex}, \hat{\outputy}$ are state and output of the \FOM with $c_2=0$. 
This shows, that simply neglecting the nonlinear term yields insufficient results.

\begin{figure}[ht]
	\centering 
	\begin{tikzpicture}
		\begin{axis}[
			width=.85\linewidth,
			height=.17\linewidth, 
			scale only axis,
			grid=both,
			grid style={line width=.1pt,, draw=gray!10},
			major grid style={line width=.2pt,, draw=gray!50},
			axis lines*=left,
			axis line style={line width=2pt},
			no markers,
			xmin=0, xmax=10,
			ymin=-8, ymax=8,
			xtick={0,2,4,6,8,10},    
			ytick={-6,-3,0,3,6},
			legend columns=2,
			legend entries={\FOM with $c_2 = 1$, \FOM with $c_2 = 0$},
			legend style={
				font=\small,
				at={(.5,-0.75)},   
				anchor=south,
				column sep=1em
			},
			legend cell align={left},
			xlabel= Time (s)
			]
			\addplot [line width=\LineWidth, red]
			table[x index=0, y index=1, col sep=comma] {Y_FOM.dat};
			\addplot [line width=\LineWidth, blue, dashed]
			table[x index=0, y index=1, col sep=comma] {Y_FOM_const_R.dat};
		\end{axis}
	\end{tikzpicture}
	\caption{Comparison of the outputs $y(t)$ of \FOMs with different $c_2$.}
	\label{fig:MSD_comp_FOM}
\end{figure}

\textbf{Evaluation of the \ROM:} To evaluate the performance of the \ROMs, we compute both the relative reduction error with respect to the state and the relative output error defined by
		\begin{equation*}
			\small
			\errorstate
			=
			\sqrt{
				\frac{\sum_{i=0}^{\snapsize}
					\|\statex(t_i)-\approxfunc(\stateRed(t_i))\|_2^2
				}
				{
					\sum_{i=0}^{\snapsize}
					\|\statex(t_i)\|_2^2
				}
			},
			\quad
			\erroroutput
			= 
			\sqrt{
				\frac{\sum_{i=0}^{\snapsize}
					\|\outputy(t_i)-\reduce{\outputy}(t_i)\|_2^2
				}
				{
					\sum_{i=0}^{\snapsize}
					\|\outputy(t_i)\|_2^2
				}
			}
			,
		\end{equation*}
		where $t_i=i\Delta t.$ 
		\begin{figure}[ht]
			\centering 
			\begin{subfigure}{0.45\linewidth}
				\begin{tikzpicture}
					\begin{semilogyaxis}[
						width=.75\linewidth,
						height=.30\linewidth, 
						scale only axis,
						grid=both,
						grid style={line width=.1pt, draw=gray!10},
						major grid style={line width=.2pt, draw=gray!50},
						axis lines*=left,
						axis line style={line width=\LineWidth},
						xmin=3.5, xmax=18.5,
						ymin=5*1e-3, ymax=2e0,
						xtick={4,8,12,16},    
						ytick={1e-3,1e-2,1e-1,1e0, 1e1},
						xlabel= Reduced-order dimension,
						ylabel= {$\errorstate$},
						xlabel style={at={(axis description cs:0.5,-0.2)},anchor=north},
						ylabel style={at={(axis description cs:-0.2,0.5)},anchor=south},
						]
						\addplot 
						[line width=\LineWidth, red, mark=star, mark size=\MarkSize]
						table[x index=0, y index=1, col sep=comma] {JR_error.dat};
						\addplot 
						[line width=\LineWidth, blue, mark=o, mark size=\MarkSize]
						table[x index=0, y index=1, col sep=comma] {NN_error.dat};
					\end{semilogyaxis}
				\end{tikzpicture}
			\end{subfigure}
			\begin{subfigure}{0.45\linewidth}
				\begin{tikzpicture}
					\begin{semilogyaxis}[
						width=.75\linewidth,
						height=.30\linewidth, 
						scale only axis,
						grid=both,
						grid style={line width=.1pt, draw=gray!10},
						major grid style={line width=.2pt, draw=gray!50},
						axis lines*=left,
						axis line style={line width=\LineWidth},
						xmin=3.5, xmax=18.5,
						ymin=5*1e-3, ymax=2e0,
						xtick={4,8,12,16},    
						ytick={1e-3,1e-2,1e-1,1e0, 1e1},
						xlabel= Reduced-order dimension,
						ylabel= {$\erroroutput$},
						xlabel style={at={(axis description cs:0.5,-0.2)},anchor=north},
						ylabel style={at={(axis description cs:-0.2,0.5)},anchor=south},
						legend to name=sharedlegend,
						legend columns=2,
						legend style=
						{font=\small, 
						column sep=1em},
						legend entries={\GMG-\POD-\ROM, \GMG-\POD-\NN-\ROM}
						]
						\addplot 
						[line width=\LineWidth, red, mark=star, mark size=\MarkSize]
						table[x index=0, y index=2, col sep=comma] {JR_error.dat};
						\addplot 
						[line width=\LineWidth, blue, mark=o, mark size=\MarkSize]
						table[x index=0, y index=2, col sep=comma] {NN_error.dat};
					\end{semilogyaxis}
				\end{tikzpicture}
			\end{subfigure}
			\vspace{0.1em}
			\begin{center}
				\pgfplotslegendfromname{sharedlegend}
			\end{center}
			\caption{Relative reduction and output error of the \GMG-\POD-\ROM and \GMG-\POD-\NN-\ROM. }
			\label{fig:MSD_comp_state_output_error}
		\end{figure}
		
		In Figure \ref{fig:MSD_comp_state_output_error}, we see that both the reduction error and output error of the \GMG-\POD-\ROM and \GMG-\POD-\NN-\ROM decrease as the reduced order increases. 
		For the reduced order above 6, the maximal deviation in the reduction and output errors are 
		\pgfplotstableread[col sep=comma]{error_div_state.dat}\DivErrorState
		\pgfplotstablegetelem{0}{0}\of\DivErrorState
		\pgfmathprintnumber[sci,precision=3]{\pgfplotsretval}\ 
		and
		\pgfplotstableread[col sep=comma]{error_div_output.dat}\DivErrorOutput
		\pgfplotstablegetelem{0}{[index]0}\of\DivErrorOutput
		\pgfmathprintnumber[sci,precision=3]{\pgfplotsretval}, respectively.	
		Thus, despite approximating $\reduce{\funcJ}(\stateRed)- \reduce{\funcR}(\stateRed)$ by neural networks, the performance of the \GMG-\POD-\NN-\ROM remains very close to that of the \GMG-\POD-\NN. 
			
		In the following,  we provide the simulation time of the \GMG-\POD-\ROM and the \GMG-\POD-\NN-\ROM of different sizes and compare them with the run time of the \FOM.\footnote{The evaluation of the gradient of the reduced-order Hamiltonian will also depend on the \FOM dimension $\FOMsize$. However, since this can be solved by using a structure-preserving \DEIM method \cite{chaturantabut2016structure}, we do not discuss it in this work.}
		The \FOM takes 
		\pgfplotstableread[col sep=comma]{FOM_run_time.dat}\runtimeFOM
		\pgfplotstablegetelem{0}{[index]1}\of\runtimeFOM
		\pgfmathprintnumber[fixed,precision=2,zerofill]\pgfplotsretval\,s, 
		while the \GMG-\POD-\ROM achieves a 
		\pgfplotstableread[col sep=comma]{speed_up_JR_FOM.dat}\speedUpJRFOM
		\pgfplotstablegetelem{0}{[index]0}\of\speedUpJRFOM
		\pgfmathprintnumber[fixed,precision=2,zerofill]\pgfplotsretval
		\,$\times$
		 faster speed-up with an average online simulation time over different reduced orders of 
		 \pgfplotstableread[col sep=comma]{run_time_POD_JR_ave.dat}\runtimeGMG
		 \pgfplotstablegetelem{0}{[index]0}\of\runtimeGMG
		 \pgfmathprintnumber[fixed,precision=2,zerofill]\pgfplotsretval\,s. The \GMG-\POD-\NN-\ROM improves this further, with an average simulation time of 
		 \pgfplotstableread[col sep=comma]{run_time_NN_ave.dat}\runtimeGMGNN
		 \pgfplotstablegetelem{0}{[index]0}\of\runtimeGMGNN
		 \pgfmathprintnumber[fixed,precision=2,zerofill]\pgfplotsretval\,s, 
		 which is a 
		 \pgfplotstableread[col sep=comma]{speed_up_NN_FOM.dat}\speedUpNNFOM
		 \pgfplotstablegetelem{0}{[index]0}\of\speedUpNNFOM
		 \pgfmathprintnumber[fixed,precision=2,zerofill]\pgfplotsretval
		 \,$\times$ speed-up compared to the \FOM. 
		In Fig. \ref{fig:NLMSD_comp_run_time}, we show an overview of the simulation time of the \ROMs with different reduced orders.
		\begin{figure}[ht]
			\centering 
			\begin{tikzpicture}
				\begin{semilogyaxis}[
					width=.55\linewidth,
					height=.20\linewidth, 
					scale only axis,
					grid=both,
					grid style={line width=.1pt, draw=gray!10},
					major grid style={line width=.2pt, draw=gray!50},
					axis lines*=left,
					axis line style={line width=\LineWidth},
					xmin=3.5, xmax=18.5,
					ymin=3*1e-0, ymax=2*1e2,
					xtick={4,8,12,16},    
					ytick={1e0, 1e1,1e2, 1e3},
					legend columns=1,
					legend entries={FOM, \GMG, \GMG-\NN},
					legend style={font=\small},
					legend cell align={left},
					legend pos = outer north east,
					xlabel= {Reduced-order},
					ylabel= {Time $(s)$},
					xlabel style={at={(axis description cs:0.5,-0.1)},anchor=north},
					ylabel style={at={(axis description cs:-0.1,0.5)},anchor=south},
					]
					\addplot 
					[line width=\LineWidth, black, mark=square, mark size=\MarkSize]
					table[x index=0, y index=1, col sep=comma] {FOM_run_time.dat};
					\addplot 
					[line width=\LineWidth, red, mark=star, mark size=\MarkSize]
					table[x index=0, y index=1, col sep=comma] {run_time_POD_JR.dat};
					\addplot 
					[line width=\LineWidth, blue, mark=o, mark size=\MarkSize]
					table[x index=0, y index=1, col sep=comma] {run_time_NN.dat};
				\end{semilogyaxis}
			\end{tikzpicture}
			\caption{Comparison of online simulation time of the \FOM and \ROMs}
			\label{fig:NLMSD_comp_run_time}
		\end{figure}
	
	\section{Conclusion and outlook}
	\label{Sec:Conclusion_Outlook}
	In this work, we present a two-step \MOR method which results to \GMG-\POD-\NN-\ROM. 
	First, a \GMG-\POD-\ROM is constructed,
	and second, 
	the interconnection matrix and dissipation matrix are approximated via a neural network.
	Moreover, the \pH structure of the \GMG-\POD-\ROM is preserved due to the design of the \NN.
	
	In the numerical example, we highlight that the \GMG-\POD-\NN-\ROM performs closely to 
	the \GMG-\POD-\ROM in terms of accuracy.
	Moreover, \GMG-\POD-\NN-\ROM achieves 
	 \pgfplotstableread[col sep=comma]{speed_up_NN_JR.dat}\speedUpNNJR
	\pgfplotstablegetelem{0}{[index]0}\of\speedUpNNJR
	\pgfmathprintnumber[fixed,precision=2,zerofill]\pgfplotsretval
	\,$\times$ and 
	\pgfplotstablegetelem{0}{[index]0}\of\speedUpNNFOM
	\pgfmathprintnumber[fixed,precision=2,zerofill]\pgfplotsretval
	\,$\times$ 
	speed-up compared to the \GMG-\POD-\ROM and the \FOM, respectively.

	Here, we did not consider the case of combining the \GMG reduction with a nonlinear approximation map, which is a subject for future work. 
	\begin{acknowledgement}
		SG and HMon acknowledge support from the DEPMAT project (NWO project number N21022h).
	\end{acknowledgement}
	\ethics{Competing Interests}{
		The authors have no conflicts of interest to declare that are relevant to the content of this chapter.}
	
	\bibliographystyle{spmpsci} 
	\bibliography{References.bib} 

@article{mehrmann2023control,
	title={Control of port-{H}amiltonian differential-algebraic systems and applications},
	author={Mehrmann, Volker and Unger, Benjamin},
	journal={Acta Numerica},
	volume={32},
	pages={395--515},
	year={2023},
	publisher={Cambridge University Press},
	doi={10.1017/S0962492922000083}
}

@article{kotyczka2018discrete,
	title={Discrete-time port-{H}amiltonian systems based on {G}auss-{L}egendre collocation},
	author={Kotyczka, Paul and Lefevre, Laurent},
	journal={IFAC-PapersOnLine},
	volume={51},
	number={3},
	pages={125--130},
	year={2018},
	publisher={Elsevier},
	doi={10.1016/j.ifacol.2018.06.035}
}

@article{schwerdtner2022structure,
	title={Structure-preserving model order reduction for index one port-{H}amiltonian descriptor systems},
	author={Schwerdtner, Paul and Moser, Tim and Mehrmann, Volker and Voigt, Matthias},
	journal={arXiv preprint arXiv:2206.01608},
	year={2022}
}

@article{GM2026structure,
	title={Structure-preserving model reduction on manifolds of port-{H}amiltonian systems},
	author={Glas, Silke and Mu, Hongliang},
	journal={arXiv preprint arXiv:2603.08656},
	year={2026}
}

@article{rettberg2025data,
	title={Data-driven identification of latent port-{H}amiltonian systems},
	author={Rettberg, Johannes and Kneifl, Jonas and Herb, Julius and Buchfink, Patrick and Fehr, J{\"o}rg and Haasdonk, Bernard},
	journal={Computational Science and Engineering},
	volume={2},
	number={1},
	pages={4},
	year={2025},
	publisher={Springer},
	doi = {10.1007/s44207-025-00007-2}
}

@article{shanker1996effect,
	title = {Effect of data standardization on neural network training},
	journal = {Omega},
	volume = {24},
	number = {4},
	pages = {385-397},
	year = {1996},
	issn = {0305-0483},
	doi = {10.1016/0305-0483(96)00010-2},
	author = {M. Shanker and M.Y. Hu and M.S. Hung},
}

@article{bonomi2017matrix,
	author = {Diana Bonomi and Andrea Manzoni and Alfio Quarteroni},
	doi = {10.1016/j.cma.2017.06.011},
	issn = {00457825},
	journal = {Computer Methods in Applied Mechanics and Engineering},
	month = {9},
	pages = {300-326},
	publisher = {Elsevier B.V.},
	title = {A matrix {DEIM} technique for model reduction of nonlinear parametrized problems in cardiac mechanics},
	volume = {324},
	year = {2017}
}

@article{califano2021geometric,
	title={Geometric and energy-aware decomposition of the {N}avier--{S}tokes equations: {A} port-{H}amiltonian approach},
	author={Califano, Federico and Rashad, Ramy and Schuller, Frederic P and Stramigioli, Stefano},
	journal={Physics of fluids},
	volume={33},
	number={4},
	year={2021},
	publisher={AIP Publishing},
	doi = {10.1063/5.0048359}
}

@article{falaize2016passive,
	title={Passive guaranteed simulation of analog audio circuits: A port-{H}amiltonian approach},
	author={Falaize, Antoine and H{\'e}lie, Thomas},
	journal={Applied Sciences},
	volume={6},
	number={10},
	pages={273},
	year={2016},
	publisher={MDPI},
	doi = {10.3390/app6100273}
}

@book{Duindam2009,
	author = {Vincent Duindam and Alessandro Macchelli and Stefano Stramigioli and Herman Bruyninckx},
	doi = {10.1007/978-3-642-03196-0},
	isbn = {978-3-642-03195-3},
	title = {Modeling and Control of Complex Physical Systems The Port-Hamiltonian Approach Springer},
	year = {2009},
	publisher = {Springer}
}

@article{schulze2023structure,
	author = {Philipp Schulze},
	journal = {Frontiers in Applied Mathematics and Statistics},
	pages = {1160250},
	title = {Structure-preserving model reduction for port-{H}amiltonian systems based on separable nonlinear approximation ansatzes},
	volume = {9},
	year = {2023},
	doi = {10.3389/fams.2023.1160250}
}

@article{vanderschaft2014port,
	author = {Arjan J. van der Schaft and Dimitri Jeltsema},
	doi = {10.1561/2600000002},
	issn = {23256826},
	issue = {2-3},
	journal = {Foundations and Trends in Systems and Control},
	pages = {173-378},
	publisher = {Now Publishers Inc},
	title = {Port-{H}amiltonian systems theory: {A}n introductory overview},
	volume = {1},
	year = {2014}
}

@article{chaturantabut2016structure,
	author = {Saifon Chaturantabut and Christopher Beattie and Serkan Gugercin},
	doi = {10.1137/15M1055085},
	issn = {10957197},
	issue = {5},
	journal = {SIAM Journal on Scientific Computing},
	pages = {B837-B865},
	publisher = {Society for Industrial and Applied Mathematics Publications},
	title = {Structure-preserving model reduction for nonlinear port-{H}amiltonian systems},
	volume = {38},
	year = {2016}
}

@article{polyuga2012effort,
	author = {Rostyslav V. Polyuga and Arjan J. van der Schaft},
	doi = {10.1016/j.sysconle.2011.12.008},
	issn = {01676911},
	issue = {3},
	journal = {Systems and Control Letters},
	pages = {412-421},
	title = {Effort- and flow-constraint reduction methods for structure preserving model reduction of port-{H}amiltonian systems},
	volume = {61},
	year = {2012}
}

@article{gugercin2012structure,
	author = {Serkan Gugercin and Rostyslav V. Polyuga and Christopher Beattie and Arjan J. van der Schaft},
	doi = {10.1016/j.automatica.2012.05.052},
	issn = {00051098},
	issue = {9},
	journal = {Automatica},
	pages = {1963-1974},
	title = {Structure-preserving tangential interpolation for model reduction of port-{H}amiltonian systems},
	volume = {48},
	year = {2012}
}

@article{kawano2018structure,
	author = {Yu Kawano and Jacquelien M.A. Scherpen},
	doi = {10.1109/TAC.2018.2811787},
	issn = {15582523},
	issue = {12},
	journal = {IEEE Transactions on Automatic Control},
	pages = {4286-4293},
	publisher = {Institute of Electrical and Electronics Engineers Inc.},
	title = {Structure preserving truncation of nonlinear port {H}amiltonian systems},
	volume = {63},
	year = {2018}
}

@article{buchfink2024model,
	author = {Patrick Buchfink and Silke Glas and Bernard Haasdonk and Benjamin Unger},
	doi = {10.1016/j.physd.2024.134299},
	issn = {01672789},
	journal = {Physica D: Nonlinear Phenomena},
	publisher = {Elsevier B.V.},
	title = {Model reduction on manifolds: {A} differential geometric framework},
	volume = {468},
	year = {2024}
}
\end{document}